\numberwithin{equation}{section}
\newcommand{\rc}{\gentabbox{1}{1}{0.4pt}{2pt}{0.4pt}{0.4pt}{{}}}
\newcommand{\row}[1]{\hbox{$\tabstyle #1$}}
\newcommand{\q}{\genblankbox{1}{1}\relax}
\definecolor{myred}{rgb}{0.75,0,0}
\definecolor{mygreen}{rgb}{0,0.5,0}
\definecolor{myblue}{rgb}{0,0,0.65}
    \def\FM{{\mathbb{F}}}
  \def\gg{{\mathfrak g}}  \def\GM{{\mathbb{G}}}
    \def\LM{{\mathbb{L}}}
    \def\OM{{\mathbb{O}}}
  \def\pg{{\mathfrak p}}  \def\PM{{\mathbb{P}}}
  \def\qg{{\mathfrak q}}
  \def\ug{{\mathfrak u}}
    \def\ZM{{\mathbb{Z}}}
    \def\EC{{\mathcal{E}}}
    \def\FC{{\mathcal{F}}}
    \def\NC{{\mathcal{N}}}
    \def\OC{{\mathcal{O}}}
    \def\QC{{\mathcal{Q}}}
\newcommand{\nc}{\newcommand} \newcommand{\renc}{\renewcommand}
\newcommand{\rdots}{\mathinner{ \mkern1mu\raise1pt\hbox{.}
    \mkern2mu\raise4pt\hbox{.}
    \mkern2mu\raise7pt\vbox{\kern7pt\hbox{.}}\mkern1mu}}
\def\wt{\widetilde}
\def\ov{\overline}
\def\to{\rightarrow}
\def\longto{\longrightarrow}
\nc{\triright}{\stackrel{[1]}{\to}}
\nc{\longtriright}{\stackrel{[1]}{\longto}}
\nc{\Br}{\mathcal{B}}
\nc{\HotRR}{{}_R\mathcal{K}_R}
\nc{\HotR}{\mathcal{K}_R}
\nc{\excise}[1]{}
\nc{\defect}{\text{df}}
\nc{\h}[1]{\underline{H}_{#1}}
\nc{\Ga}{\mathbb{G}_a} 
\nc{\Gm}{\mathbb{G}_m} 
\nc{\Perv}{{\mathbf{P}}}
\nc{\IH}{{\mathrm{IH}}}
\nc{\ic}{\mathbf{IC}}
\nc{\gl}{{\mathfrak{gl}}}
\renc{\sl}{{\mathfrak{sl}}}
\renc{\sp}{{\mathfrak{sp}}}
\nc{\HBM}{H^{BM}}
 \DeclareMathOperator{\Hom}{Hom}
\DeclareMathOperator{\End}{End} 
\newtheorem{thm}{Theorem}[section]
\newtheorem{lem}[thm]{Lemma}
\newtheorem{prop}[thm]{Proposition}
\newtheorem{cor}[thm]{Corollary}
\theoremstyle{definition}
\theoremstyle{remark}
\def\uk{\underline{k}}
\begin{document}

\begin{abstract}
We construct affine pavings of Springer-type fibers over the enhanced nilpotent cone.  This resolves a question of Achar-Henderson and implies the existence of perverse parity sheaves on the enhanced nilpotent cone.
\end{abstract}

\title{Affine pavings and the enhanced nilpotent cone}

\author{Carl Mautner} \address{ Department of Mathematics, University of California, Riverside, CA 92521, USA}
\email{mautner@math.ucr.edu}

\maketitle


\section{Notation and Results}


Let $\mathbb F$ be an algebraically closed field of arbitrary characteristic and let $V$ be an $n$-dimensional $\mathbb F$-vector space.  Let $G = GL(V)$ and $\gg = Lie(G)$ be its Lie algebra with nilpotent cone $\NC \subset \gg$.  The $G$-variety $V \times \NC$ is known as the enhanced nilpotent cone.  As shown independently by~\cite{AH} and~\cite{Tr}, the $G$-orbits in $V \times \NC$ are in bijection with the set $\QC_n$ of bipartitions of $n$ (meaning ordered pairs of partitions $(\mu;\nu)$ such that $|\mu|+|\nu|=n$).  The closure of each orbit $\OC_{\mu;\nu}$ has a semismall resolution of singularities $\pi_{\mu;\nu} : \wt{\FC_{\mu;\nu}} \to V \times \NC$ (whose construction we recall below).  The aim of this paper is to construct affine pavings of the fibers of these resolutions.  This claim appeared in~\cite{AH}, but was then retracted in~\cite{AHcor}, where it is posed as an open problem.  Our construction is a variant of the method introduced in \cite{DLP} to construct affine pavings of Springer fibers for classical groups.

To describe the resolutions $\pi_{\mu;\nu}$, recall that \cite{AH} associate a `back-to-back union' diagram to $(\mu;\nu)$, whose $i$-th row contains $\mu_i + \nu_i$ boxes and $(\mu_1-i)$-th column has $\mu^t_{i+1}$ boxes for $i \geq 0$ and $(\mu_1+i)$-th column has $\nu^t_{i}$ boxes for $i>0$.  For example, the diagram associated to $((3,1,1);(3,2)) \in \QC_{10}$ is represented as:
\[\begin{tableau}
\row{\c\c\rc\c\c \c}
\row{\q\q\rc\c\c}
\row{\q\q\rc}
\end{tableau}
\]

Let $\FC_{\mu;\nu}$ be the variety of partial flags 
\[0 = W_0 \subset W_1 \subset \dots \subset W_{\mu_1+\nu_1} = V,\]
where $W_i$ has dimension equal to the number of boxes in or to the
left of the $i$-th column in the diagram of
$(\mu;\nu)$.  So in the example above, $\mu_1+\nu_1 = 6$ and the dimensions of the subspaces are: 1, 2, 5, 7, 9 and 10.

We will consider, more generally, for any sequence $\rho$, $0=r_0 < r_1 < \dots < r_m = n$, the variety $\FC_\rho$ of partial flags 
\[0 = W_0 \subset W_1 \subset \dots \subset W_m = V,\]
where the dimension of $W_i$ is $r_i$.

Recall the resolution of $\ov{\OC_{\mu;\nu}}$ defined in \cite{AH} via the space 
\[ \widetilde{\FC_{\mu;\nu}} := \{(v,x,(W_i)) \in V\times \NC \times
\FC_{\mu;\nu} | v \in W_{\mu_1}, x(W_i) \subset W_{i-1} \},\]
and the projection $\pi_{\mu;\nu} : \wt{\FC_{\mu,\nu}} \to
V \times \NC$ to the first two coordinates. By \cite[Thm. 4.5]{AH}, $\pi_{\mu;\nu}$ is a semismall resolution of $\ov{\OC_{\mu;\nu}}$.

More generally, for any $j\in \ZM$ such that $0 \leq j \leq m$, let $\widetilde{\FC_{\rho,j}}$ be defined as
\[ \widetilde{\FC_{\rho,j}} := \{(v,x,(W_i)) \in V\times \NC \times
\FC_{\rho} | v \in W_{j}, x(W_i) \subset W_{i-1} \},\]
and let $\pi_{\rho,j} : \widetilde{\FC_{\rho,j}} \to V \times \NC$ denote the projection.

Our main result is the following:

\begin{thm}
\label{main}
For any $(v,x) \in V \times \NC$, the fiber $\pi_{\rho,j}^{-1}(v,x)$ has an affine paving.  In particular, the fiber $\pi_{\mu;\nu}^{-1}(v,x)$ admits an affine paving.
\end{thm}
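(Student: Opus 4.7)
\noindent\emph{Proof plan.} The strategy is to prove Theorem~\ref{main} by induction on $n = \dim V$, adapting the De Concini--Lusztig--Procesi approach to Springer fibers of classical groups. At each inductive step, I would choose an $x$-stable line $L \subset V$ depending naturally on the pair $(v,x)$, stratify $F := \pi_{\rho,j}^{-1}(v,x)$ by the smallest index of the flag that contains $L$, and exhibit each stratum as an affine-space bundle over a fiber of the same type defined on the quotient $V/L$.

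To choose $L$, I would fix a Jordan basis of $V$ compatible with the orbit data of $(v,x)$ (such a basis exists since the $G$-orbit is classified by a bipartition $(\mu;\nu)$) and take $L \subset \ker x$ to be the line spanned by a suitable socle vector with $v \notin L$. For $W_\bullet \in F$, set $k(W_\bullet) := \min\{i : L \subset W_i\}$, which is well defined because $L \subset V = W_m$; this gives a decomposition $F = \bigsqcup_{k=1}^{m} F_k$. The projection $V \to V/L$ sends each $F_k$ to a flag $\bar W_\bullet$ of dimensions $r_i$ (for $i < k$) and $r_i - 1$ (for $i \geq k$), satisfying the shift condition with respect to the induced nilpotent $\bar x$ and containing $\bar v$ at an appropriate index $j'$. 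After collapsing any consecutive equal dimensions in the new sequence $\rho'$, the image is a fiber of the form $\pi_{\rho',j'}^{-1}(\bar v, \bar x)$ on $V/L$, which is affinely paved by the inductive hypothesis.

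The main step that remains is to verify that each projection $F_k \to \pi_{\rho',j'}^{-1}(\bar v, \bar x)$ is a Zariski-locally trivial affine bundle. For $i \geq k$, the subspace $W_i$ is forced to be the preimage of $\bar W_i$; for $i < k$, one must lift $\bar W_i$ to a subspace not containing $L$, subject to the compatibility $x(W_i) \subset W_{i-1}$ and, when $j < k$, to $v \in W_j$. Parametrizing such lifts by subspaces of $\mathrm{Hom}(\bar W_i, L)$ cut out by the linear conditions coming from the shift and the $v$-constraint should exhibit the fibers as affine spaces, and since affine bundles preserve affine pavability this closes the induction. The principal technical obstacle is this last verification together with the bookkeeping in boundary cases --- especially when $\bar v = 0$, when $\rho'$ collapses a step, or when $k = j$ and the $v$-constraint migrates nontrivially under the projection --- where one must confirm that the reduced data $(\rho', j', \bar v, \bar x)$ still lies in the class of problems to which the theorem applies.
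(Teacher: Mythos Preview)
Your plan is a Spaltenstein-style direct induction on $\dim V$, which is a genuinely different route from the paper's. The paper adapts the De Concini--Lusztig--Procesi method: it introduces the cocharacter $\lambda$ coming from the grading attached to a normal basis for $(v,x)$, shows that the intersection of the fiber with each orbit of the associated parabolic $P$ on $\FC_\rho$ is smooth (using that $(v,x)$ lies in the dense $P$-orbit on $V^{\ge 0}\times\ug_P$), and then invokes Bia{\l}ynicki--Birula together with Jantzen's positive-characteristic substitute to reduce to paving the $\lambda$-fixed locus. The inductive step splits $V=V_1\oplus V_2$ compatibly with $x$, $\lambda$, and $v$, brings in a second cocharacter, and reduces to strictly smaller fibers; the residual ``distinguished'' cases are handled directly because there $\ker x$ has one-dimensional $\lambda$-weight spaces, so the fixed locus fibers finitely over a smaller problem. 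The payoff is that every reduction step passes through \emph{smooth} varieties, so one never has to compute fiber dimensions of ad hoc stratifications by hand.

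Your approach would be more elementary if it went through, but there is a real gap, and it is exactly the step you flag as the ``principal technical obstacle.'' For $F_k$ to be an affine bundle over the quotient fiber you need the space of lifts to have constant dimension along the base. In your parametrization the lifts for $i<k$ are governed by a single $\phi\in\Hom(\bar W_{k-1},L)$ subject to $\phi\circ\bar x|_{\bar W_k}=x_L|_{\bar W_k}$ (and $\phi(\bar v)=v_L$ when $j<k$); the solution space has dimension $r_{k-1}-\dim\bar x(\bar W_k)$ up to the $v$-correction, and $\dim\bar x(\bar W_k)=(r_k-1)-\dim(\ker\bar x\cap\bar W_k)$. For \emph{partial} flags the quantity $\dim(\ker\bar x\cap\bar W_k)$ genuinely jumps over the quotient fiber, so $F_k$ is not an affine bundle without further stratification; the $v$-constraint adds a second source of jumping (whether $\bar v\in\bar x(\bar W_k)$), and the image of $F_k$ in the quotient fiber is only an open subset, not the whole thing. ``A suitable socle vector with $v\notin L$'' is not specific enough to control any of this. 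Even in the unenhanced full-flag case Spaltenstein's argument requires a particular choice of $L$ and a nontrivial computation; for partial flags with the extra $v$-condition you would have to refine the strata by these jumping invariants and verify that each refined piece is still an iterated affine bundle over something inductively paved. That is the substance of the theorem, not bookkeeping, and the paper's cocharacter method is designed to avoid it.
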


As a simple corollary, we observe that this implies the existence of perverse parity sheaves\footnote{We consider here only the constant pariversity.} on the enhanced nilpotent cone.  For simplicity, we assume for the rest of the introduction that $\mathbb F=\mathbb C$ the field of complex numbers.  Let $k$ be a complete local principal ideal domain.
Let $D_{G}(V \times \NC;k)$ denote the $G$-equivariant constructible derived category of $k$-sheaves.

\begin{cor}
For each $G$-orbit $\OC_{\mu;\nu}$, there exists up to isomorphism one parity sheaf $\EC_{\mu;\nu} \in D_G(V \times \NC;k)$ with support $\overline{\OC_{\mu;\nu}}$, and it is perverse.
\end{cor}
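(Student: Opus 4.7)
The plan is to deduce the corollary from Theorem \ref{main} together with the semismallness of $\pi_{\mu;\nu}$, via the Juteau--Mautner--Williamson theory of parity complexes; once the main theorem is in hand, the argument is essentially formal.

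First I would consider the shifted proper pushforward
\[
\EC := (\pi_{\mu;\nu})_* \underline{k}_{\wt{\FC_{\mu;\nu}}}[d], \qquad d := \dim \wt{\FC_{\mu;\nu}},
\]
as an object of $D_G(V \times \NC;k)$. Since $\wt{\FC_{\mu;\nu}}$ is smooth and $\pi_{\mu;\nu}$ is proper, $\EC$ is Verdier self-dual up to an even shift, and for any $(v,x) \in V \times \NC$ its stalk (resp.\ costalk) at $(v,x)$ is a shift of the cohomology (resp.\ Borel--Moore homology) of the fiber $\pi_{\mu;\nu}^{-1}(v,x)$. By Theorem \ref{main} every such fiber admits an affine paving, so both these graded modules are free over $k$ and concentrated in degrees of a single parity. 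Hence $\EC$ is both $*$-parity and $!$-parity on the orbit stratification of $V \times \NC$, i.e.\ a $G$-equivariant parity complex with respect to the constant pariversity.

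Next I would invoke Krull--Schmidt together with the classification of indecomposable parity complexes. The complex $\EC$ decomposes as a finite direct sum of shifts of indecomposable parity sheaves, each supported on the closure of some $G$-orbit $\OC_{\lambda;\kappa} \subseteq \ov{\OC_{\mu;\nu}}$. Because $\pi_{\mu;\nu}$ is birational onto $\ov{\OC_{\mu;\nu}}$, the restriction of $\EC$ to the open orbit $\OC_{\mu;\nu}$ is a shift of the $G$-equivariant constant sheaf of rank one; exactly one summand therefore has full support $\ov{\OC_{\mu;\nu}}$, and we define $\EC_{\mu;\nu}$ to be this summand. Its uniqueness up to isomorphism is the standard uniqueness statement for parity sheaves with prescribed support and equivariant local system, which applies here because the only relevant local system is the trivial one (the $G = GL(V)$-stabilizers on $V \times \NC$ are connected, so there are no nontrivial equivariant local systems on the orbits).

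Finally, perversity is immediate from semismallness. By \cite[Thm. 4.5]{AH}, $\pi_{\mu;\nu}$ is semismall, so $\EC$ is a perverse sheaf, and since the perverse heart is closed under direct summands, $\EC_{\mu;\nu}$ is perverse as well. The real obstacle in all of this is Theorem \ref{main} itself; the remaining work is to check that the $G$-equivariant version of the JMW formalism applies to the orbit stratification of $V \times \NC$, which is routine given that the strata are smooth and their equivariant cohomology is concentrated in even degrees.
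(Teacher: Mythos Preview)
Your argument is correct and follows essentially the same route as the paper: push forward the shifted constant sheaf along the semismall resolution, use Theorem~\ref{main} to see it is parity and semismallness to see it is perverse, then extract the unique indecomposable summand with full support, with uniqueness coming from the JMW formalism once one knows the stabilizers are connected and the strata have even equivariant cohomology. The only point the paper makes more explicit is the input for that last step: it cites that the reductive quotient of each stabilizer is a product of general linear groups (so the equivariant cohomology of each orbit is torsion-free and even), which is exactly what you label ``routine'' at the end.
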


\begin{proof}
First note that there are finitely many $G$-orbits in $V \times \NC$ and for any $(v,x) \in V\times \NC$ the stabilizer is connected \cite[Prop. 2.8(7)]{AH} and has reductive quotient isomorphic to a product of general linear groups \cite[Thm. 2.12]{Sun}.  It follows that the orbits are equivariantly simply connected and have equivariant cohomology concentrated in even degrees.  Thus, as a $G$-variety, the enhanced nilpotent cone satisfies the parity conditions of \cite{JMW2}, which implies the uniqueness statement. 

For the existence of $\EC_{\mu;\nu}$, note that the resolution $\pi_{\mu;\nu}$ is semismall, so the push-forward sheaf $(\pi_{\mu;\nu})_* \uk_{\widetilde{\FC_{\mu;\nu}}} [\dim \OC_{\mu;\nu}]$ is perverse and Theorem~\ref{main} implies that it is also a parity complex.  It follows that the push-forward sheaf, which has support $\overline{\OC_{\mu;\nu}}$, has a perverse indecomposable parity complex $\EC_{\mu;\nu}$ with support $\overline{\OC_{\mu;\nu}}$ as a direct summand.
\end{proof}

\subsection{Acknowledgements}  The author is grateful to Gwyn Bellamy for a useful remark and to Anthony Henderson for his interest and comments on an early draft. This work was done in part while the author was visiting the Max Planck Institut f\"ur Mathematik in Bonn and he would like to thank MPIM for excellent working conditions.

\section{Construction of Affine Paving}

\subsection{}
\label{subsec-alphapart}

As defined and shown in \cite{AH}, we may pick a
normal basis for $(v,x)$.  In this basis, each basis vector of $V$ corresponds to a box of the back-to-back union diagram for $(\alpha;\beta)$.
We denote by $v_{i,j}$ the basis vector corresponding to the $j$-th box in the $i$-th row.  In this basis the action of $x$ is given by $xv_{i,j} = v_{i,j-1}$ (or $0$ if $j=1$), and the vector $v$ expressed as $v = \sum_{i = 1}^{\alpha^t_1} v_{i,\alpha_i}$.  For example, for $((3,1,1);(3,2)) \in \QC_{10}$ we have basis vectors:
\[
\begin{array}{cccccc}
v_{11}&v_{12}&v_{13}&v_{14}&v_{15}&v_{16}\\
&&v_{21}&v_{22}&v_{23}&\\
&&v_{31}&&&\\
\end{array}
\]
and $v = v_{13}+v_{21}+v_{31}$.

We grade $V$ by giving the basis vector $v_{i,j}$ grading $\alpha_i -j$.  Let
$V(i)$ denote the $i$-th graded part.  This induces a grading on $\gg
= \Hom(V,V)$.  Let $\gg(i)$ denote the $i$-th graded part of $\gg$
(i.e., $\oplus_j \Hom(V(j),V(j+i))$).  Let $V^{\geq 0} = \oplus_{i \geq 0}
V(i)$ be the non-negatively graded part of $V$.  (In the notation of \cite{AH}, $V^{\geq 0} = E^x v$. See Proposition 2.8(5) of \textsl{loc. cit.})

Note that $v \in V(0)$ and $x \in \gg(1)$.

Consider the parabolic subalgebra $\pg = \oplus_{i\geq 0} \gg(i)$, its Levi subalgebra $\gg(0) = \oplus_i \End(V(i))$ and unipotent radical $\ug_P = \oplus_{i > 0} \gg(i)$.  Let $G_0 = \prod_i GL(V(i))$ and $P$ be the
corresponding Levi and parabolic subgroups of $G$.  (In \cite[following Thm. 4.1]{AH}, $P$ is denoted $P^{(v,x)}$.) 

Let $\lambda:\GM_m \to G$ denote a cocharacter inducing
this Levi decomposition.  

\begin{lem}
The $P$-orbit of $(v,x)$ in $V^+ \times \ug_P$ is dense.
\end{lem}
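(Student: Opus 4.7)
I would reduce density of the $P$-orbit to surjectivity of the differential of the orbit map at the identity. Consider
\[
\phi : P \to V^+ \times \ug_P, \qquad p \mapsto (pv,\Ad_p x).
\]
Since the target is a vector space (hence smooth and irreducible), the $P$-orbit of $(v, x)$ is open, equivalently dense, iff the differential
\[
d\phi_1 : \pg \to V^+ \oplus \ug_P, \qquad Y \mapsto (Yv,[Y, x]),
\]
is surjective. The cocharacter $\lambda$ induces compatible $\ZM_{\geq 0}$-gradings on $\pg$, $V^+$, and $\ug_P$; since $v \in V(0)$ and $x \in \gg(1)$, the map $d\phi_1$ respects the grading and splits as a direct sum over $k \geq 0$ of graded pieces
\[
\gg(k) \to V(k) \oplus \gg(k+1), \qquad Y \mapsto (Yv,[Y, x]).
\]
It therefore suffices to verify surjectivity in each grade $k$.

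For each $k$, I would argue by explicit construction in the normal basis. The operator $x$ decomposes $V = \bigoplus_i V_i$ into Jordan chains, one per row of the bipartition diagram of $(\alpha;\beta)$: $V_i = \operatorname{span}\{v_{i,1}, \ldots, v_{i, \alpha_i + \beta_i}\}$ with $xv_{i,j} = v_{i,j-1}$. The grading identifies $v$ as the sum $\sum_i v_{i, \alpha_i}$ of the unique grade-zero vector from each chain (with $\alpha_i > 0$). Given $(w, z) \in V(k) \oplus \gg(k+1)$, I would decompose $Y = \sum_l Y^{(l)}$ with $Y^{(l)} \in \Hom(V(l), V(l+k))$, so the conditions to satisfy read $Y^{(0)} v = w$ and $Y^{(l+1)} \circ x - x \circ Y^{(l)} = z^{(l)}$ for each $l$. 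Because each chain $V_i$ is one-dimensional per grade, decomposing $Y$ by pairs of chains reduces the recursion to a triangular scalar system along each chain, which one can propagate from its grade-zero starting value using that $x$ is a shift along the chain.

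The main obstacle is the edge bookkeeping: at the top ($v_{i,1}$) and bottom ($v_{i, \alpha_i + \beta_i}$) of each chain, $x$ fails to be invertible, so propagation could in principle generate obstructions, and the single base constraint $Y^{(0)} v = w$ at grade $0$ must mesh with the starting values of all the chain-by-chain recursions. Verifying consistency --- and in particular that the free parameters in $Y^{(l)}$ away from the $v$-constraint are enough to absorb all obstructions --- is the heart of the argument. As a sanity check, graded surjectivity in every $k \geq 0$ is equivalent to $\dim \operatorname{Stab}_P(v, x) = \dim G_0 - |\alpha|$, which is consistent with the stabilizer dimension implicit in the Achar--Henderson orbit classification.
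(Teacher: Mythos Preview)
The paper's own ``proof'' is simply a citation to \cite[Lemma~4.2]{AH}, together with the remark that the argument there (stated over $\CM$) goes through over any algebraically closed field. So there is no detailed argument in this paper to compare against; you are, in effect, proposing to reprove the cited lemma from scratch.

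Your strategy is the natural direct one and is set up correctly. Surjectivity of $d\phi_1$ at the identity does imply that $\phi$ is smooth at $1$ (target smooth), hence open, hence the orbit is dense in the irreducible space $V^{\geq 0}\times\ug_P$; this step is valid in arbitrary characteristic. The graded decomposition is also correct: since $v\in V(0)$ and $x\in\gg(1)$, the map $Y\mapsto (Yv,[Y,x])$ respects the $\lambda$-grading and splits as $\gg(k)\to V(k)\oplus\gg(k+1)$, so surjectivity may be checked degree by degree. Your dimension sanity check is right as well: $\dim V^{\geq 0}=|\alpha|$, so surjectivity forces $\dim\operatorname{Stab}_P(v,x)=\dim G_0-|\alpha|$, consistent with the known orbit dimensions.

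The genuine gap is exactly where you flag it. The recursion $Y^{(l+1)}x - xY^{(l)} = z^{(l)}$ is \emph{not} simply triangular along each chain: solving for $Y^{(l+1)}$ requires that $z^{(l)}+xY^{(l)}$ vanish on $\ker(x|_{V(l)})$, which is spanned by those $v_{i,1}$ with $\alpha_i=l+1$. This feeds a constraint back onto $Y^{(l)}$, and one must check that the free parameters in $Y^{(l)}$ (coming from its values on the cokernel of $x|_{V(l-1)}$, spanned by those $v_{i,\alpha_i+\beta_i}$ with $l=-\beta_i$) together with the residual freedom in $Y^{(0)}$ beyond the single condition $Y^{(0)}v=w$ are enough to absorb all such constraints. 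You have not done this bookkeeping, and it is precisely the content of the lemma. As written, your proposal is a sound plan but not yet a proof.
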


\begin{proof}
This is Lemma 4.2 of \cite{AH}.  In \textit{loc.~cit.}, $\FM$ is assumed to be the field of complex numbers, but the same proof applies more generally.
%
%
%
%
%
\end{proof}

Now consider the fiber $\pi_{\rho,j}^{-1}(v,x) \subset
\FC_{\rho}$.  Recall that $\FC_{\rho}$ can be identified with a
conjugacy class of parabolic subalgebras of $\gg$, by associating to a partial flag $\{W_i\}$
its stabilizer subalgebra in $\gg$.

\begin{prop}
The intersection of $\pi_{\rho,j}^{-1}(v,x)$ with any $P$-orbit
on $\FC_\rho$ is smooth.
\end{prop}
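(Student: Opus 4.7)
The plan is to realize $\pi_{\rho,j}^{-1}(v,x) \cap \OC$ (for a $P$-orbit $\OC \subset \FC_\rho$) as a quotient of a smooth variety by a free group action, using the preceding lemma (density of the $P$-orbit in $V^{\geq 0} \times \ug_P$) to verify smoothness via a transversality argument.

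Fix a flag $F_0 \in \OC$ with stabilizer $P_{F_0} \subset P$, and let $\mathfrak{n}(F_0) := \{y \in \gg \mid y W_i(F_0) \subset W_{i-1}(F_0)\ \forall i\}$ be the nilpotent radical of its stabilizer Lie algebra. Then $\OC = P/P_{F_0}$, and $pF_0 \in \OC$ lies in $\pi_{\rho,j}^{-1}(v,x)$ if and only if
\[ p^{-1}v \in W_j(F_0) \quad \text{and} \quad p^{-1}xp \in \mathfrak{n}(F_0). \]
Define the morphism $\Phi : P \to V \times \gg$ by $p \mapsto (p^{-1}v, p^{-1}xp)$, and set $X := \Phi^{-1}(W_j(F_0) \times \mathfrak{n}(F_0))$. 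A direct check shows that $X$ is right-$P_{F_0}$-invariant (since each $q \in P_{F_0}$ preserves $W_j(F_0)$ and $\mathfrak{n}(F_0)$ under conjugation), and that the image of $X$ under the quotient $P \to \OC$ is exactly $\pi_{\rho,j}^{-1}(v,x) \cap \OC$.

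The key step is to show that $X$ is smooth. By the preceding lemma, the image $\Phi(P) = P \cdot (v,x)$ is open in the linear subspace $V^{\geq 0} \times \ug_P \subset V \times \gg$, so $X = \Phi^{-1}(L)$ for the linear subspace
\[ L := (V^{\geq 0} \cap W_j(F_0)) \times (\ug_P \cap \mathfrak{n}(F_0)) \subset V^{\geq 0} \times \ug_P. \]
At every $p \in X$, the differential $d\Phi_p$ surjects onto $V^{\geq 0} \times \ug_P$: at $p = e$ a direct calculation gives $d\Phi_e(\xi) = (-\xi v, [x,\xi])$, whose image equals the tangent space of the orbit $P\cdot(v,x)$ at $(v,x)$, which by the lemma is all of $V^{\geq 0} \times \ug_P$; the statement at arbitrary $p$ follows by translation, since $\Phi(pq) = q^{-1}\Phi(p)$. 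Thus $\Phi$ is transverse to the affine subspace $L \subset V^{\geq 0} \times \ug_P$ along $X$, and $X$ is smooth of the expected codimension.

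The intersection $\pi_{\rho,j}^{-1}(v,x) \cap \OC = X/P_{F_0}$ is then smooth, as a quotient of a smooth variety by the free right action of the smooth group $P_{F_0}$. The main technical point is verifying surjectivity of $d\Phi_p$ onto $V^{\geq 0} \times \ug_P$; for this the essential input is the density statement in the previous lemma, and the argument works uniformly in the characteristic of $\mathbb F$ because transversality can be checked at the level of linear maps of tangent spaces.
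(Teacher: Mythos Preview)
Your proof is correct and takes essentially the same route as the paper's: the paper identifies the intersection with a variety of the type treated in \cite[Sect.~2.1]{DLP} (for the prehomogeneous $P$-space $V^{\geq 0}\times\ug_P$ and the $(P\cap Q)$-stable subspace $(W_j\times\ug_Q)\cap(V^{\geq 0}\times\ug_P)$) and simply invokes that reference, whereas you have written out the underlying transversality argument. One small caveat: your assertion that the image of $d\Phi_e$ equals the tangent space of the orbit is precisely separability of the orbit map, which in positive characteristic does not follow formally from density alone---it does hold here (we are in type $A$), but strictly speaking deserves a word of justification, and the same point is implicit in the paper's appeal to \cite{DLP}.
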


This statement is a minor generalization of Lemma 4.3 in \cite{AH} (where only the fibers of $\pi_{\mu;\nu}$ are considered).  As in \textit{loc. cit.}, we follow the strategy of \cite[Prop 3.2]{DLP}.

\begin{proof}
Let $\{W_i\} \in \pi_{\rho,j}^{-1}(v,x)$ be a partial flag
corresponding to a parabolic subalgebra $\qg \subset \gg$.  Let $\OC$
be the $P$-orbit in $\FC_{\rho}$ of $\{W_i\}$ (or equivalently
$\qg$).  Let $Q$ be the parabolic subgroup of $G$ with Lie algebra
$\qg$.  Then the stabilizer of $\qg$ in $P$ is the intersection $H=P
\cap Q$ and $\OC = P \cdot \qg \cong P/H$.  For $p \in P$, $p\qg$ is in the fiber
$\pi_{\rho,j}^{-1}(v,x)$ if and only if $(p^{-1}v,Ad(p^{-1})x) \in
W_{j} \times \ug_Q$.

Thus the intersection $\pi_{\rho,j}^{-1}(v,x) \cap \OC$ is a
subvariety of $P/H$ of the type in \cite[Sect. 2.1]{DLP} relative to the
prehomogeneous space $\overline{P \cdot (v,x)}=V^+ \times
\ug_P$ for $P$ and the $H$-stable subspace $U = (W_{j} \times \ug_Q) \cap (V^+ \times \ug_P)$.  We
conclude that it is smooth.
\end{proof}

Recall that a finite partition of a variety $X$ into subsets is called an $\alpha$-partition if the subsets can be ordered $X_1, X_2, \ldots, X_t$ such that $X_1 \cup X_2 \cup \ldots \cup X_k$ is closed in $X$ for all $k = 1, \dots, t$.  As the Bia{\l}ynicki-Birula decomposition of $\FC_{\rho}$ with respect to $\lambda$ is an $\alpha$-partition, it follows that the intersections $\pi_{\rho,j}^{-1}(v,x) \cap \OC$, as $\OC$ runs over the $P$-orbits in $\FC_{\rho}$, form an $\alpha$-partition of $\pi_{\rho,j}^{-1}(v,x)$.

\subsection{}

We will now observe that it suffices to construct an affine paving of the fixed point sets  $(\pi_{\rho,j}^{-1}(v,x) \cap \OC)^\lambda$.
First note that we may regard $\OC$ as a vector bundle over $\OC^\lambda$ where $\lambda$ acts linearly on the fibers with strictly positive weights and $\pi_{\rho,j}^{-1}(v,x) \cap \OC \subset \OC$ is a $\mathbb G_m$-stable smooth closed subvariety.

Suppose, more generally, that $\rho: E \to Y$ is a vector bundle over a smooth variety $Y$, with a fiber preserving $\mathbb G_m$-action on $E$ with strictly positive weights and that $Z \subset E$ is a $\mathbb G_m$-stable smooth closed subvariety.

As noted in \cite[1.5]{DLP}, if $\mathbb F = \mathbb C$, one can conclude that $\pi(Z)=Z^{\GM_m}$ is smooth and $Z$ is a subbundle of $E$ restricted to $Z^{\GM_m}$.  Thus the preimage of an affine paving of $Z^{\GM_m}$ is an affine paving of $Z$.

For arbitrary characteristic, it is not clear that $Z$ must be a subbundle of $E$ over $Z^{\GM_m}$.  Nonetheless, the following result can be gleaned from~\cite[Sect. 11]{Jan}:

\begin{thm}
\label{Jan}
Let $\rho:E \to Y$ and $Z \subset E$ be as above.  Then:
\begin{enumerate}
\item the fixed point variety $Z^{\mathbb G_m}$ is smooth, and
\item if $Z^{\mathbb G_m}$ admits an affine paving, then so does $Z$.
\end{enumerate}
\end{thm}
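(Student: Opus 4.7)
For part (1), the plan is to invoke the general fact (due to Iversen, and to Fogarty in greater generality) that the scheme-theoretic fixed locus of a $\GM_m$-action on a smooth variety is itself smooth, applied directly to $Z$. This is independent of characteristic and yields smoothness of $Z^{\GM_m}$ with no further work.

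For part (2), the strategy is to propagate the paving of $Z^{\GM_m}$ via the attractor morphism. Because $\GM_m$ acts trivially on $Y$ and with strictly positive weights on the fibers of $\rho : E \to Y$, the limit $\lim_{t\to 0} t\cdot e$ exists for every $e \in E$ and equals $\rho(e) \in E^{\GM_m} = Y$. Since $Z$ is closed and $\GM_m$-stable, this limit lies in $Z^{\GM_m}$ for every $z \in Z$, giving a morphism $\pi_Z : Z \to Z^{\GM_m}$. Given an affine paving $Y_1,\ldots,Y_t$ of $Z^{\GM_m}$, I would set $Z_i := \pi_Z^{-1}(Y_i)$. Because $\pi_Z$ is a morphism and $Y_1 \cup \cdots \cup Y_k$ is closed in $Z^{\GM_m}$ for every $k$, the pieces $Z_i$ inherit the $\alpha$-partition property from the $Y_i$, and the proof reduces to showing that each $Z_i$ is isomorphic to an affine space.

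Over $\CM$ one argues that $Z$ is a $\GM_m$-subbundle of $E|_{Z^{\GM_m}}$, so $Z_i$ is a Zariski-locally trivial affine bundle over the affine cell $Y_i$ and hence itself affine. In positive characteristic this subbundle property can fail, so instead one works \'etale-locally, following~\cite[Sect. 11]{Jan}. The smoothness of $Z$ combined with a $\GM_m$-equivariant linearization of the action near a fixed point $y$ identifies an \'etale neighborhood of $y$ in $Z$ with the corresponding neighborhood of $0$ in a $\GM_m$-subrepresentation of the tangent space $T_y Z$. The positive-weight directions of this subrepresentation supply the contracting directions of $\pi_Z$, and the $\GM_m$-fixed directions parametrize $Z^{\GM_m}$ \'etale-locally at $y$. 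Patching these local models along $Z^{\GM_m}$ exhibits the restriction $\pi_Z : Z_i \to Y_i$ as a Zariski-locally trivial affine-space bundle over the affine space $Y_i$, and such a bundle has total space isomorphic to an affine space.

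The main obstacle I anticipate is precisely this final \'etale-local step in positive characteristic: one must upgrade the formal tangent-space picture near a fixed point to honest Zariski-local triviality of $\pi_Z$ and then glue those trivializations compatibly over the connected components of $Z^{\GM_m}$ meeting a given cell $Y_i$. This is the technical content supplied by~\cite[Sect. 11]{Jan}; it uses the smoothness of $Z$ in an essential way, and the strict positivity of the $\GM_m$-weights on the fibers of $\rho$ to rule out spurious fixed-point strata appearing inside a contracting cell.
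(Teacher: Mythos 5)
Your proposal follows the paper's proof exactly: part (1) is the Iversen/Fogarty smoothness result for fixed loci of linearly reductive group actions, and part (2) is Jantzen's Lemma~11.16(b) together with the attractor-morphism argument of \cite[Sect.~11]{Jan}, and your discussion of the failure of the subbundle structure away from $\CM$ and its replacement by the \'etale-local linearization is a faithful account of what that citation supplies. The paper itself gives essentially no more detail than these references, so there is nothing further to contrast.
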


Part (1) follows from a general result~\cite[Prop. 1.3]{iv} which states that the fixed point set of a linearly reductive group\footnote{Meaning a reductive group whose category of finite-dimensional representations is semisimple (e.g, a torus).} acting on a smooth variety is smooth.  Part (2) is a slight generalization of \cite[Lem. 11.16(b)]{Jan}, which refers to the special case when $E$ is a parabolic orbit on the full flag variety, but the proof only uses the conditions above.

We conclude that $(\pi_{\rho,j}^{-1}(v,x) \cap \OC)^\lambda$ is a smooth variety and also projective (because it is the intersection of the projective varieties $\pi_{\rho,j}^{-1}(v,x)$ and $\OC^\lambda$) and that if $(\pi_{\rho,j}^{-1}(v,x) \cap \OC)^\lambda$ admits an affine paving, then so does $\pi_{\rho,j}^{-1}(v,x)$.

\subsection{}
\label{subsec-decomp}

By the previous paragraph, it suffices to construct an affine paving of the $\lambda$-fixed point set $(\pi_{\rho,j}^{-1}(v,x) \cap \OC)^\lambda$.  We proceed by induction on the dimension of $V$.  Assume the statement is true for any vector space of dimension less than $n$.

Suppose that there is a nontrivial direct sum decomposition $V = V_1 \oplus V_2$ such that 
\begin{enumerate}
\item $V_1$ and $V_2$ are preserved by the action of $x$,
\item $V_1$ and $V_2$ are preserved by the action of the cocharacter $\lambda$, and
\item $v \in V_1 \subset V_1 \oplus V_2$.  
\end{enumerate}

Let $x_1 = x|_{V_1}$ and $x_2 = x|_{V_2}$. 

Let $\chi: \GM_m \to G$ be the cocharacter that acts on $V_1$ by scaling and on $V_2$ by the inverse.  Let $\LM = GL(V_1) \times GL(V_2)$ be the corresponding Levi subgroup and $\PM$ the corresponding parabolic.

Let $\FC_{\rho}^\chi$ be the $\chi$-fixed point set of $\FC_{\rho}$.  Each component of $\FC_{\rho}^\chi$ is contained in a unique $\PM$-orbit $\OM$ on $\FC_{\rho}$ and is in fact equal to $\OM^\chi$.  Fix $\qg \in \OM^\chi$ and let $Q\subset G$ be the corresponding parabolic subgroup and $\{ W_i\}_{i=1}^m$ the corresponding partial flag.  Then there is an isomorphism $\OM^\chi \cong \LM/ \LM \cap Q \cong \FC_{\rho'} \times \FC_{\rho''}$.  Here $\FC_{\rho'}$ and $\FC_{\rho''}$ are partial flag varieties for $GL(V_1)$ and $GL(V_2)$ respectively and $\rho'$ and $\rho''$ are sequences $0 = r'_0 < r'_1 < \dots < r'_{m'} = \dim V_1$, $0 =r''_0 < r''_1 < \dots < r''_{m''} = \dim V_1$.

The isomorphism $\OM^\chi \to  \FC_{\rho'} \times \FC_{\rho''}$ restricts to an isomorphism 
\[\pi^{-1}_{\rho,j}(v,x)^\chi \cap \OM \to \pi^{-1}_{\rho',j'}(v, x_1) \times \pi^{-1}_{\rho'',0}(0, x_2), \]
where $j'$ is defined as the number between $1$ and $m'$ such that $r'_{j'} = \dim (W_j \cap V_1)$.

This isomorphism is compatible with the action of $\lambda$, so taking $\lambda$-fixed points we obtain an isomorphism:
\[\pi^{-1}_{\rho,j}(v,x)^{\chi,\lambda} \cap \OM \to \pi^{-1}_{\rho',j'}(v,x_1)^\lambda \times \pi^{-1}_{\rho'',0}(0,x_2)^\lambda. \]
But $\pi^{-1}_{\rho,j}(v,x)^{\chi,\lambda}$ is also the $\chi$-fixed points of $\pi^{-1}_{\rho,j}(v,x)^\lambda$.  We have seen that the latter is smooth and projective, thus the Bia{\l}ynicki-Birula decomposition of $\pi^{-1}_{\rho,j}(v,x)^\lambda$ with respect to the action of $\chi$ gives an $\alpha$-partition whose pieces are locally trivial fibrations with fibers isomorphic to affine spaces over the products $\pi^{-1}_{\rho',j'}(v,x_1)^\lambda \times \pi^{-1}_{\rho'',0}(0,x_2)^\lambda$.  Applying Theorem~\ref{Jan}(2) to the $\chi$-stable smooth closed subvarieties $\pi^{-1}_{\rho,j}(v,x)^\lambda \cap \OM \subset \OM$, we find that $\pi^{-1}_{\rho,j}(v,x)^\lambda$ admits an affine paving if each $\pi^{-1}_{\rho',j'}(v, x_1)$ and $\pi^{-1}_{\rho'',0}(0, x_2)$ admit affine pavings.  The later admit affine pavings by our induction hypothesis.

\subsection{}  We call a pair $(v,x) \in \OC_{(\alpha,\beta)} \in V\times \NC$ \emph{distinguished} if for any direct sum $V=V_1 \oplus V_2$ satisfying conditions (1)-(3) of section~\ref{subsec-decomp}, either $V_1$ or $V_2$ is trivial.  By the previous paragraph, we are reduced to studying $\pi^{-1}_{\rho,j} (v,x)^\lambda$ for distinguished pairs $(v,x)$.

We first classify distinguished pairs.

\begin{lem}
If $(v,x) \in \OC_{(\alpha;\beta)}$ is distinguished then either (1) $\alpha = \emptyset$ (i.e., $v=0$) and $\beta = (n)$ (i.e., $x$ is a regular nilpotent) or (2) $\alpha = (\alpha_1, \dots, \alpha_k), \beta = (\beta_1, \dots, \beta_k)$ and $\alpha_1 > \alpha_2 > \dots > \alpha_k >0$, $\beta_1 > \beta_2 \dots > \beta_k$.
\end{lem}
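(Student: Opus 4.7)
The plan is to argue by contraposition: if $(\alpha;\beta)$ satisfies neither case~(1) nor case~(2), I will exhibit a nontrivial $(x,\lambda)$-preserved direct sum decomposition $V = V_1 \oplus V_2$ with $v \in V_1$, contradicting distinguishedness. Using the normal basis, $V = \bigoplus_i M_i$ as graded $\mathbb F[x]$-modules, where $M_i$ is the row-$i$ subspace spanned by the $v_{i,j}$; all my decompositions will arise from nontrivial graded $\mathbb F[x]$-linear idempotents $e \in \End(V)$ satisfying $ev = v$, via $V = eV \oplus (1-e)V$.

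First, the easy reductions. If $v = 0$ then all $\alpha_i = 0$, so $\alpha = \emptyset$; if $\beta$ has at least two parts, the splitting $V = M_1 \oplus \bigoplus_{i \geq 2} M_i$ is a nontrivial $(x,\lambda)$-preserved decomposition with $v \in V_1$, forcing $\beta = (n)$ and putting us in case~(1). If $v \neq 0$ but some row $i_0$ has $\alpha_{i_0} = 0$, then $v$ has no $M_{i_0}$-component, and $V = \bigl(\bigoplus_{i \neq i_0} M_i\bigr) \oplus M_{i_0}$ works. So I may assume $v \neq 0$ and $\alpha_i \geq 1$ for every row $i$, after which $\alpha$ and $\beta$ have a common length $k$ (padding $\beta$ with a trailing zero if necessary).

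To force the remaining strict monotonicity, I build the required idempotent by two cases. If $\alpha_i = \alpha_{i+1}$ for some $i$, define $e$ to be the identity on $M_\ell$ for $\ell \notin \{i, i+1\}$, zero on $M_i$, and $e(v_{i+1, j}) := v_{i, j} + v_{i+1, j}$ on $M_{i+1}$. If instead $\alpha$ is strictly decreasing but $\beta_i = \beta_{i+1}$ (for some $i < k$), define $e$ to be the identity on $M_\ell$ for $\ell \notin \{i, i+1\}$, zero on $M_{i+1}$, and $e(v_{i, j}) := v_{i, j} + v_{i+1,\, j + \alpha_{i+1} - \alpha_i}$ on $M_i$, with the second summand interpreted as $0$ whenever the subscript falls outside the index range of row~$i+1$. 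In each case the equality $\alpha_i = \alpha_{i+1}$, respectively $\beta_i = \beta_{i+1}$, is precisely what makes $e$ preserve the $\lambda$-grading and what makes it a well-defined endomorphism of the cyclic module $M_i$ (the image of the top generator is annihilated by the correct power of $x$); routine checks then confirm $\mathbb F[x]$-linearity, $e^2 = e$, and $ev = v$ (the new term cancelled by $e$ on one row is exactly reintroduced on the other). Since $e$ is neither $0$ nor the identity, the resulting decomposition is the promised nontrivial one.

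I expect the main technical nuisance to be in the second case, where the Jordan blocks $M_i$ and $M_{i+1}$ have different lengths so that the definition of $e$ on $M_i$ involves a nontrivial index shift $j \mapsto j + \alpha_{i+1} - \alpha_i$. The key verifications are that the image of the top generator $v_{i, \alpha_i + \beta_i}$ lies in the correct $\lambda$-weight space of $M_{i+1}$ and is killed by $x^{\alpha_i + \beta_i}$; both statements reduce to the inequality $\alpha_{i+1} - \alpha_i < 0$ together with the equality $\beta_i = \beta_{i+1}$, and the rest of the verifications are formal.
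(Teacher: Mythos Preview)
Your proposal is correct and takes essentially the same approach as the paper: the decompositions $V_1 = eV$, $V_2 = (1-e)V$ produced by your idempotents coincide exactly with the explicit subspaces the paper writes down in each case (split off a row with $\alpha_{i_0}=0$; for $\alpha_i=\alpha_{i+1}$ take $V_2=M_i$ and replace rows $i,i+1$ in $V_1$ by the diagonal $v_{i,j}+v_{i+1,j}$; for $\beta_i=\beta_{i+1}$ take $V_2=M_{i+1}$ and replace rows $i,i+1$ in $V_1$ by the $x$-orbit of $v_{i,\alpha_i+\beta_i}+v_{i+1,\alpha_{i+1}+\beta_i}$). The idempotent packaging is a clean way to verify the conditions, but the underlying construction is the same.
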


\begin{proof}
Assume $(v,x) \in \OC_{(\alpha;\beta)}$ is distinguished.  For a partition $\mu$, let $\ell(\mu)$ denote the number of nonzero terms.

Suppose that $\ell(\beta) > \ell(\alpha)$, so $\beta_{\ell(\alpha)+1} > 0$.  Let $V_2 \subset V$ be the subspace spanned by the basis vectors $v_{\ell(\alpha)+1,j}$ for all $j$ and $V_1 \subset V$ be the subspace spanned by the complementary set of basis vectors.  It is clear that this is a direct sum decomposition and satisfies conditions (1)-(3) of~\ref{subsec-decomp}.  As $(v,x)$ is distinguished and $V_2$ is non-trivial by definition, we conclude that $V_1$ is trivial and so $\alpha = \emptyset$ and $\ell(\beta)=1$.

On the other hand, suppose that $\ell(\beta) \leq \ell(\alpha)$ and let $k = \ell(\alpha)$.

If $\alpha_l = \alpha_{l+1}$ for some $l < k$, we let $V_2 \subset V$ be the subspace spanned by the basis vectors $v_{l,j}$ for all $j$.  Let $V_1$ be the span of the basis vectors $v_{i,j}$ for all $i \neq l, l+1$ and the vectors $v_{l,j} + v_{l+1,j}$ for all  $j$ such that $1 \leq j \leq \alpha_{l+1} + \beta_{l+1}$.  Note that $V = V_1 \oplus V_2$, $V_1$ and $V_2$ are both nontrivial and the conditions (1)-(3) of~\ref{subsec-decomp} are satisfied.  This contradicts the assumption that $(v,x)$ be distinguished.

Similarly, suppose that $\beta_l = \beta_{l+1}$ for some $l < k$.  Let $V_1 \subset V$ be the span of the basis vectors $v_{i,j}$ for all $i \neq l, l+1$ and the vectors $x^m(v_{l,\alpha_l + \beta_l}+v_{l+1,\alpha_{l+1} + \beta_l})$ for all $m$.  Let $V_2 \subset V$ be the span of the basis vectors $v_{l+1,j}$ for all $j$.  Again we have $V = V_1 \oplus V_2$, $V_1$ and $V_2$ are both nontrivial, and the conditions (1)-(3) of~\ref{subsec-decomp} are satisfied.  This contradicts the assumption that $(v,x)$ be distinguished.
\end{proof}

We can now check that we have an affine paving in both cases.

Case (1): In this case $(\pi_{\rho,j}^{-1}(v,x) \cap \OC)^\lambda$ is either empty or a single point.

Case (2): 
As no two parts of $\alpha$ are equal, the kernel of $x$ breaks up under the action of $\lambda$ into a direct sum of 1-dimensional weight spaces with distinct weights.

For any partial flag $\{V_i\}_{i=0}^m \in \pi^{-1}_{\rho,j} (v,x)^\lambda$, $V_1$ must be contained in the kernel of $x$ and also be a direct sum of $\lambda$-weight spaces.  Let $A$ denote the finite set of such $r_1$-dimensional subspaces of the kernel of $x$.  Consider the forgetful map from $\pi^{-1}_{\rho,j} (v,x)^\lambda$ to $A$.  The fiber of this map over a point $W \in A$ is simply $\pi^{-1}_{\bar\rho,\bar{j}} (\bar{v},\bar{x})^\lambda$, where $\bar\rho = (0 < r_2-r_1 < r_3 - r_1 < \dots < r_m - r_1 = n -r_1$, $\bar{j}=j-1$ (or $0$ if $j=0$), $\bar{v}$ is the image of $v$ in the quotient $V/W$ and $\bar{x}$ is the induced action on $V/W$.  Having reduced to the case of a smaller dimensional vector space, we are done.

\bibliographystyle{myalpha}
\bibliography{gen}

\begin{thebibliography}{DCLP88}

\bibitem[AH08]{AH}
P.~N. Achar and A.~Henderson.
\newblock Orbit closures in the enhanced nilpotent cone.
\newblock {\em Adv. Math.}, 219(1):27--62, 2008.

\bibitem[AH11]{AHcor}
P.~N. Achar and A.~Henderson.
\newblock Corrigendum to ``{O}rbit closures in the enhanced nilpotent cone''
  [{A}dv. {M}ath. 219 (1) (2008) 27--62].
\newblock {\em Adv. Math.}, 228(5):2984--2988, 2011.

\bibitem[DCLP88]{DLP}
C.~De~Concini, G.~Lusztig, and C.~Procesi.
\newblock Homology of the zero-set of a nilpotent vector field on a flag
  manifold.
\newblock {\em J. Amer. Math. Soc.}, 1(1):15--34, 1988.

\bibitem[Ive72]{iv}
B.~Iversen.
\newblock A fixed point formula for action of tori on algebraic varieties.
\newblock {\em Invent. Math.}, 16:229--236, 1972.

\bibitem[Jan04]{Jan}
J.~C. Jantzen.
\newblock Nilpotent orbits in representation theory.
\newblock In {\em Lie theory}, volume 228 of {\em Progr. Math.}, pages 1--211.
  Birkh\"auser Boston, Boston, MA, 2004.

\bibitem[JMW14]{JMW2}
D.~Juteau, C.~Mautner, and G.~Williamson.
\newblock Parity sheaves.
\newblock {\em J. Amer. Math. Soc.}, 27(4):1169--1212, 2014.

\bibitem[Sun11]{Sun}
M.~Sun.
\newblock Point stabilisers for the enhanced and exotic nilpotent cones.
\newblock {\em J. Group Theory}, 14(6):825--839, 2011.

\bibitem[Tra09]{Tr}
R.~Travkin.
\newblock Mirabolic {R}obinson-{S}chensted-{K}nuth correspondence.
\newblock {\em Selecta Math. (N.S.)}, 14(3-4):727--758, 2009.

\end{thebibliography}

\end{document}